\newcommand{\La}{\langle}
\newcommand{\Ra}{\rangle}
\newcommand{\pd}{\partial}
\def\supp{\operatorname{supp}}
\newcommand{\al}{\alpha}
\newcommand{\la}{\lambda}
\newcommand{\vf}{\varphi}
\newcommand{\cB}{\mathcal B}
\newcommand{\cD}{\mathcal D}
\newcommand{\bB}{\mathbb B}
\newcommand{\bR}{\mathbb R}
\newtheorem{theorem}{Theorem}[section]
\newtheorem{cor}[theorem]{Corollary}
\newtheorem{lemma}[theorem]{Lemma}
\theoremstyle{definition}
\newtheorem{remark}[theorem]{Remark}
\numberwithin{equation}{section}
\newcounter{vremennyj}
\begin{document}

\title[]%
{
 Bellman function sitting  on a tree
}
\author[N.~Arcozzi]{Nicola Arcozzi}
\address{Universit\`{a} di Bologna, Department of Mathematics, Piazza di Porta S. Donato, 40126 Bologna (BO)}
\email{nicola.arcozzi@unibo.it}
\thanks{Theorem 3.1 was obtained in the frameworks of the project 17-11-01064 by the Russian Science Foundation}
\thanks{NA is partially supported by the grants INDAM-GNAMPA 2017 "Operatori e disuguaglianze integrali in spazi con simmetrie" and PRIN 2018 "Variet\`{a} reali e complesse: geometria, topologia e analisi armonica"}
\author[I.~ Holmes]{Irina Holmes}
\thanks{IH is partially supported by the NSF an NSF Postdoc under Award No.1606270}
\address{Department of Mathematics, Michigan Sate University, East Lansing, MI. 48823}
\email{holmesir@msu.edu}
\author[P. Mozolyako]{Pavel Mozolyako}
\thanks{PM is supported by the Russian Science Foundation grant 17-11-01064}
\address{Universit\`{a} di Bologna, Department of Mathematics, Piazza di Porta S. Donato, 40126 Bologna (BO)}
\email{pavel.mozolyako@unibo.it}
\author[A.~Volberg]{Alexander Volberg}
\thanks{AV is partially supported by the NSF grant DMS-160065}
\address{Department of Mathematics, Michigan Sate University, East Lansing, MI. 48823}
\email{volberg@math.msu.edu \textrm{(A.\ Volberg)}}
\makeatletter
\@namedef{subjclassname@2010}{
  \textup{2010} Mathematics Subject Classification}
\makeatother
\subjclass[2010]{42B20, 42B35, 47A30}
%
%
\keywords{Carleson embedding on dyadic tree, bi-parameter Carleson embedding, Bellman function, capacity on dyadic tree and bi-tree}
\begin{abstract}
In this note  we give a  proof-by-formula of  certain  important embedding inequalities on a tree. We also consider the case of a bi-tree, where a different
approach is explained.
\end{abstract}
\maketitle

\subsection{Hardy operator on a tree}
\label{Hop}
Let $I^0$ be a unit interval. Let us associate the dyadic lattice $\mathcal{D}(I^0)$ and  the uniform directed dyadic tree $T$ in a usual way. First we define the Hardy operator, the dual Hardy operator and the logarithmic potential: given a function $\varphi: T\rightarrow \mathbb{R}_+$ we let
\begin{equation}\notag
\begin{split}
& (I\varphi)(\alpha) = \sum_{\beta\geq\alpha}\varphi(\beta),\quad \alpha \in T;\\
&(I^*\varphi)(\alpha) = \sum_{\beta\leq\alpha}\varphi(\beta),\quad \beta \in T
;\\
& V^{\varphi}(\gamma) = (II^*\varphi)(\gamma),\quad \gamma\in T,
\end{split}
\end{equation}
where $\leq$ is the natural order relation on $T$.

We always may think that the tree $T$ is finite (albeit very large). By the boundary $\pd T$ we understand the vertices that are not 
connected to smaller vertices. 

Each dyadic interval $Q$ in $\mathcal{D}(I^0)$ corresponds naturally to a vertex $\al_Q$.

Let $\mu$ be a measure on the  tree $T$, so just the collection of non-negative numbers $\{\mu_P\}_{P\in T}$.
Assuming $\mu$ to be a measure on $T$, we have
\begin{equation}\notag
\begin{split}
& (I\mu)(\al_R) = \sum_{Q\supset R}\mu_Q,\quad Q,R\in \mathcal{D}(I^0);\\
&(I^*\mu)(\al_Q) = \mu(Q)=\sum_{P\subset Q, \al_P\in \pd T} \mu_P,\quad Q \in \mathcal{D}(I^0);\\
& V^{\mu}(\al_P) = (II^*\mu)(\al_P),\quad P\in \mathcal{D}(I^0),
\end{split}
\end{equation}
the second equality is valid under the  assumption of $\supp\mu\subset  \partial T$. 


We will answer the question when $I : \ell^2(T) \to \ell^2(T, \mu)$.  Passing to the adjoint operator we see that this is equivalent to the following inequality
\begin{equation}
\label{adj}
\sum_{Q\in T} \big(\sum_{P \le Q} \vf (P)\mu_P\big)^2  \lesssim \big(\sum_{R\in T} \vf(R)^2 \mu_R\big)\,.
\end{equation}

\begin{theorem}
\label{TREE1}
Operator $I$  is a bounded operator $I : \ell^2(T) \to \ell^2(T, \mu)$ if and only if 
\begin{equation}
\label{Carl0}
\sum_{Q\in T, \, Q\le R} \big(\sum_{P \le Q} \mu_P\big)^2  \lesssim \big(\sum_{Q \le R} \mu_Q\big)\quad \forall R\in T\,.
\end{equation}
\end{theorem}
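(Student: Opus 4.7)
The plan is to prove the two directions separately, with necessity being a routine test-function argument and sufficiency — the substantive content — carried out via a Bellman-function ``proof by formula'' on the tree.

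\textbf{Necessity.} I would test the equivalent adjoint inequality \eqref{adj} with $\varphi(P) = \mathbf{1}_{\{P \le R\}}$ for a fixed $R \in T$. The right-hand side becomes $m(R) := \sum_{P \le R}\mu_P$. Restricting the outer sum on the left to those $Q \le R$, the condition $P \le R$ in the inner sum is automatic from $P \le Q$, so the inner sum is exactly $m(Q)$. Thus $\sum_{Q \le R} m(Q)^2 \le C\, m(R)$, which is precisely \eqref{Carl0}.

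\textbf{Sufficiency.} For a given $\varphi \ge 0$, at each $\alpha \in T$ I would introduce the running quantities
\[
F(\alpha) = \sum_{P \le \alpha}\varphi(P)\mu_P,\qquad G(\alpha) = \sum_{P \le \alpha}\varphi(P)^2\mu_P,\qquad m(\alpha) = \sum_{P\le\alpha}\mu_P,
\]
which obey the tree recursion $F(\alpha) = \varphi(\alpha)\mu_\alpha + \sum_i F(\alpha_i)$, and likewise for $G$ and $m$, where $\{\alpha_i\}$ enumerates the children of $\alpha$. Cauchy--Schwarz gives $F^2 \le Gm$, and evaluating \eqref{Carl0} at the root with just the top term yields $m(\alpha) \le m(R_0) \le C$ uniformly. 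The target is $\sum_\alpha F(\alpha)^2 \le C' G(R_0)$, which is \eqref{adj}.

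The Bellman strategy is to construct an explicit function $\mathbf{B}(F,G,m)$ on the admissible domain $\{F^2 \le Gm,\ 0 \le m \le C\}$ satisfying (i) the main inequality at every vertex,
\[
\mathbf{B}(F(\alpha),G(\alpha),m(\alpha)) \;\ge\; F(\alpha)^2 \;+\; \sum_i \mathbf{B}(F(\alpha_i),G(\alpha_i),m(\alpha_i)),
\]
together with (ii) the size bound $\mathbf{B}(F,G,m) \le C' G$. Given such a $\mathbf{B}$, iterating (i) from the root down through the tree and applying (ii) at the root yields $\sum_\alpha F(\alpha)^2 \le \mathbf{B}(F(R_0),G(R_0),m(R_0)) \le C' G(R_0)$, as required. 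A natural candidate barrier encodes the Carleson constraint through a term like $F^2/(\lambda - m)$ with $\lambda > C$; the main obstacle — and the heart of the proof-by-formula — is pinning down the precise explicit $\mathbf{B}$ (and constants $\lambda, C'$) for which (i) holds for arbitrary admissible local data $(\varphi(\alpha),\mu_\alpha,\{(F_i,G_i,m_i)\})$. Once the formula is in hand, the one-step verification reduces to a convex-analysis check (convexity of $t\mapsto 1/(\lambda-t)$ combined with Cauchy--Schwarz on each child's contribution), and the remainder of the argument is a mechanical telescoping along $T$.
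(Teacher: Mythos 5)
Your test-function argument is correct: with $\varphi = \mathbf{1}_{\{P \le R\}}$, the restriction of the outer sum in \eqref{adj} to $Q \le R$ reproduces \eqref{Carl0} for that fixed $R$.

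\textbf{Sufficiency.} Here there is a genuine gap, and it is fatal to the plan as written. You extract from \eqref{Carl0} only the single-term root bound $m(\alpha) \le m(R_0) \le C$, and then try to build a three-variable Bellman function $\mathbf{B}(F,G,m)$ on $\{F^2 \le Gm,\ 0 \le m \le C\}$ satisfying your main inequality
\[
\mathbf{B}\big(F(\alpha),G(\alpha),m(\alpha)\big) \;\ge\; F(\alpha)^2 + \sum_i \mathbf{B}\big(F(\alpha_i),G(\alpha_i),m(\alpha_i)\big).
\]
No such $\mathbf{B}$ can exist, because the domain and inequality impose contradictory demands regardless of formula. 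Take a vertex $\alpha$ with $\mu_\alpha = 0$ and a single child $\alpha_1$ carrying $(F_1,G_1,m_1)=(\varepsilon,\varepsilon,\varepsilon)$ for some small $\varepsilon>0$ (this is realized, e.g., by a long path with a point mass $\varepsilon$ at its bottom and $\varphi \equiv 1$). Then $(F(\alpha),G(\alpha),m(\alpha)) = (\varepsilon,\varepsilon,\varepsilon)$ too, and your main inequality demands $\mathbf{B}(\varepsilon,\varepsilon,\varepsilon) \ge \varepsilon^2 + \mathbf{B}(\varepsilon,\varepsilon,\varepsilon)$, which is false. The issue is structural: the Carleson condition \eqref{Carl0} is strictly stronger than the bound $m \le C$, and on the path example $m$ is bounded while the embedding fails, so no estimate using only $m$ can succeed.

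What is missing is a fourth Bellman variable tracking the partial Carleson sum. The paper's proof proceeds through Theorem \ref{T:Carleson embedding theorem-tree} (with $\alpha_I = |I|^2$) using the four-variable function $\cB(F,f,A,v) = 4\big(F - f^2/(v+A)\big)$ of \eqref{E:BellmanDef}, where the additional coordinate $A_I = \frac{1}{|I|}\sum_{K\subset I}\alpha_K(\Lambda)_K^2$ carries the running sum appearing on the left of \eqref{Carl0}, and the domain constraint $A \le v$ is exactly the Carleson hypothesis. In your notation you would need something like $A(\alpha) := \sum_{Q\le\alpha} m(Q)^2$ with the admissibility constraint $A(\alpha) \lesssim m(\alpha)$; then the path example leaves the domain (there $A(\alpha_1)=n$ while $m(\alpha_1)=1$), and the obstruction disappears. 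The delicate point the paper then has to handle --- and which you would face as well --- is that on a tree the variables are super/submartingales rather than martingales (see Remark after Corollary \ref{C:tree2} and Lemma \ref{L:BEstimate}), so the one-step verification is not simply concavity plus Cauchy--Schwarz; it requires the sign analysis in \eqref{neutr} to neutralize the bad behavior of the $f$-variable.
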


This is proved in Theorem \ref{T:Carleson embedding theorem-tree}  below by the use of Bellman function.

\section{Bellman function on a tree}
\label{Btree}

\begin{theorem}
\label{tree1}
Let $dw$ be a positive measure on $I_0:=[0,1]$. Let $\La w\Ra_I$ denote $w(I)/|I|$.
Let $\varphi$ be a measurable test function. Then if 
	\begin{equation}
	\label{test1}
	\frac{1}{|J|} \sum_{I\in \cD(J)} \La w\Ra_I^2 |I|^2 \le \La w\Ra_J\quad \forall J\in \cD(I_0),
	\end{equation}
then
	\begin{equation}
	\label{emb1}
	\sum_{I\in \cD(I_0)} \La \varphi w\Ra_I^2 |I|^2 \lesssim \La \varphi^2 w\Ra_{I_0} |I_0|,
	\end{equation}
\end{theorem}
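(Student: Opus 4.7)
The plan is a direct Bellman function argument on the dyadic tree $\cD(I_0)$. At each $J$ I would record the normalized quantities $\bar y := (\int_J \varphi\,dw)/w(J)$, $\bar z := (\int_J \varphi^2\,dw)/w(J)$, and $m := \Phi_J/w(J)$, where $\Phi_J := \sum_{I \in \cD(J)} w(I)^2 = \sum_{I \in \cD(J)} \langle w\rangle_I^2 |I|^2$. The hypothesis (\ref{test1}) rewrites as $\Phi_J \le w(J)$ for every $J$, so $m \in [0,1]$, and Cauchy--Schwarz gives $\bar y^2 \le \bar z$; thus $(\bar y, \bar z, m)$ always lies in the convex domain $\Omega := \{\bar y^2 \le \bar z,\ 0 \le m \le 1\}$.

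Next I would compute the dyadic evolution. Setting $p^\pm := w(J^\pm)/w(J)$, additivity of integrals makes $\bar y$ and $\bar z$ martingales along the tree, while $\Phi_J = w(J)^2 + \Phi_{J^+} + \Phi_{J^-}$ yields $m - (p^+ m^+ + p^- m^-) = w(J)$. In particular $(\bar y, \bar z, m - w(J)) = p^+(\bar y^+, \bar z^+, m^+) + p^-(\bar y^-, \bar z^-, m^-)$ is a bona fide convex combination inside $\Omega$, which is the crucial observation that lets concavity of $B$ be used in the $(\bar y, \bar z)$ variables while the $m$-drift carries the gain.

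The heart of the proof is the Bellman function
\[
B(\bar y, \bar z, m) := 4\Bigl(\bar z - \frac{\bar y^2}{1+m}\Bigr).
\]
I would verify: (a) $0 \le B \le 4\bar z$; (b) $B$ is concave on $\Omega$, since its Hessian factors as $-8(1+m)^{-3}\, v v^T$ with $v = ((1+m), 0, -\bar y)^T$, hence is negative semidefinite; (c) $\partial_m B = 4\bar y^2/(1+m)^2 \ge \bar y^2$ on $\{0 \le m \le 1\}$. Writing the Bellman drop as $[B(\bar y, \bar z, m) - B(\bar y, \bar z, m - w(J))] + [B(\bar y, \bar z, m - w(J)) - p^+ B^+ - p^- B^-]$, property (c) bounds the first bracket below by $w(J)\bar y^2$, while property (b) applied to the convex combination above forces the second bracket to be nonnegative. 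Hence $B(J) - p^+ B(J^+) - p^- B(J^-) \ge w(J)\, \bar y_J^2$ at every internal $J$.

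Finally I would multiply by $w(J)$ and sum over $J \in \cD(I_0)$. Since $w(J) p^\pm = w(J^\pm)$, the left side telescopes to $w(I_0) B(I_0)$ minus nonnegative leaf contributions, and is therefore at most $4 w(I_0)\bar z_{I_0} = 4\int \varphi^2\,dw$, while the right side reproduces $\sum_J w(J)^2 \bar y_J^2 = \sum_J \langle \varphi w\rangle_J^2 |J|^2$, yielding (\ref{emb1}). The main obstacle is guessing the correct $B$; once one commits to $B = 4(\bar z - \bar y^2/(1+m))$, the verification reduces to an elementary Hessian computation, and the drift/concavity split makes the main inequality transparent. Small book-keeping is needed to include the leaves in the final sum (handled by a trivial Cauchy--Schwarz estimate together with $w(L)\le 1$, which itself follows from (\ref{test1})) and to avoid division by zero when $w(J)=0$ (handled by a slight regularization of $w$ and a limiting argument on a sufficiently refined finite tree).
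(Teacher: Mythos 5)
Your argument is correct and is essentially the same approach as the paper's: the paper proves Theorem \ref{tree1} by specializing the NTV99 weighted Carleson embedding theorem (Theorem \ref{T:W-CET}) to $\alpha_I = |I|^2$, whose proof rests on the Bellman function $\cB(F,f,A,v)=4\bigl(F - f^2/(v+A)\bigr)$, and your $B(\bar y,\bar z,m)$ is exactly $\cB/v$ in the reduced variables $\bar z = F/v$, $\bar y=f/v$, $m=A/v$. You have simply carried out the Bellman induction directly, using $w(J)$ as the summation weight instead of invoking the black box, and your treatment of the leaves, the finite truncation, and the degenerate case $w(J)=0$ is correct.
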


\noindent This can be obtained as a direct consequence of the weighted Carleson embedding theorem \cite{NTV99}:

\begin{theorem}
\label{T:W-CET}
Let $\cD$ be a dyadic lattice, $w$ be any weight, and $\{\alpha_I\}_{I\in\cD}$ be a sequence of non-negative numbers. Then, if
	\begin{equation}\label{E:wCarleson embedding theoremtest}
	\frac{1}{|J|} \sum_{I \subset J} \alpha_I \La w\Ra_I^2 \leq \La w\Ra_{J} \quad \forall J\in \cD,
	\end{equation}
then
	\begin{equation}\label{E:wCarleson embedding theoremconc}
	\sum_{I\in\cD} \alpha_I \La \varphi \sqrt{w}\Ra_{I}^2 \lesssim \|\varphi\|_{L^2}^2, 
	\end{equation}
for all $\varphi \in L^2$.
\end{theorem}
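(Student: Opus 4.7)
The plan is to prove \eqref{E:wCarleson embedding theoremconc} by a Bellman-function argument in the Nazarov--Treil--Volberg style: encode the testing hypothesis as monotonicity of an auxiliary state variable along the dyadic tree, then telescope a pointwise concavity inequality from the root.

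It is convenient to reformulate first. On $\{w>0\}$ write $\psi := \varphi/\sqrt{w}$ and $\mu_I := \alpha_I \La w\Ra_I^2$, and denote the $w$-average $\La\psi\Ra^w_I := w(I)^{-1}\int_I \psi\,w$. A direct computation gives
\[
\La \varphi\sqrt{w}\Ra_I \;=\; \La\psi\Ra^w_I \cdot \La w\Ra_I, \qquad \|\varphi\|_{L^2}^2 \;=\; \|\psi\|_{L^2(w)}^2,
\]
so \eqref{E:wCarleson embedding theoremconc} becomes $\sum_{I\in\cD}\mu_I (\La\psi\Ra^w_I)^2 \lesssim \|\psi\|_{L^2(w)}^2$, while \eqref{E:wCarleson embedding theoremtest} becomes $\sum_{I\subset J}\mu_I \le w(J)$. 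The theorem is now equivalent to a $w$-weighted Carleson embedding with Carleson constant $1$.

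At each $I \in \cD(I_0)$ I would introduce the state vector
\[
f_I := \La\psi\Ra^w_I,\quad F_I := \La\psi^2\Ra^w_I,\quad M_I := \frac{1}{w(I)}\sum_{J\subset I}\mu_J,
\]
which lies in the domain $\Omega := \{(f,F,M) : f^2 \le F,\ 0 \le M \le 1\}$ by Cauchy--Schwarz in $L^2(w)$ and by the reformulated hypothesis. The $w$-weighted martingale relations $w(I) f_I = w(I_+) f_{I_+} + w(I_-) f_{I_-}$, $w(I) F_I = w(I_+) F_{I_+} + w(I_-) F_{I_-}$, together with the splitting $w(I) M_I = \mu_I + w(I_+) M_{I_+} + w(I_-) M_{I_-}$, show that for any candidate $B(f,F,M)$ the quantity $w(I)B(v_I) - w(I_+)B(v_{I_+}) - w(I_-)B(v_{I_-})$ separates into a concavity term in $(f,F)$ and a linear increment in $\mu_I$. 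I would then seek $B \ge 0$ on $\Omega$ with $B \le C F$ satisfying, for every admissible triple tied by these relations,
\[
w(I)B(v_I) - w(I_+)B(v_{I_+}) - w(I_-)B(v_{I_-}) \;\ge\; \mu_I\,f_I^2.
\]
Telescoping this over the finite truncation $\cD(I_0)$ and exhausting $\cD$ by an increasing sequence of such intervals then yields $\sum_I \mu_I f_I^2 \le C w(I_0) F_{I_0} = C\|\psi\|_{L^2(w)}^2$.

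The main obstacle is to exhibit such a Bellman function explicitly. A natural ansatz is
\[
B(f,F,M) \;=\; C_1 F \;-\; \frac{C_2 \, f^2}{1+\tau M}
\]
for suitable constants $C_1, C_2, \tau > 0$: the parameter $\tau$ is chosen so that $\partial_M B$ generates exactly the $\mu_I f_I^2$ increment, while the Hessian of $B$ in $(f,F)$ must remain concave enough along the directions allowed by $f^2 \le F$. Checking the main inequality reduces to a pointwise quadratic estimate on $\Omega$ in which both constraints $f^2 \le F$ and $M \le 1$ must be used simultaneously; balancing the constants so that this estimate is valid on the full domain is the step I expect to be the most delicate part of the argument.
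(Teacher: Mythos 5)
Your proposal is sound and ends up at the right place, but it travels a genuinely different road than the paper does. The paper (following \cite{NTV99}) keeps the Lebesgue dyadic structure and works with the four-variable function $\cB(F,f,A,v)=4\bigl(F-\frac{f^2}{v+A}\bigr)$ on the domain $\{f^2\le Fv,\ A\le v\}$, with the main inequality \eqref{MI99} tied to Lebesgue-average martingales and the $m$ term coming from the freedom in choosing $\alpha_{I_0}$. You instead change unknowns ($\psi=\varphi/\sqrt{w}$, $\mu_I=\alpha_I\La w\Ra_I^2$), pass to $w$-weighted averages, and reduce the weighted Carleson embedding to the unweighted one for the $w$-martingale filtration. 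This collapses the state space to three variables $(f,F,M)$ with the simpler domain $\{f^2\le F,\ 0\le M\le 1\}$ and replaces the Lebesgue martingale splittings by $w$-weighted ones: $w(I)f_I=w(I_+)f_{I_+}+w(I_-)f_{I_-}$, etc., and $w(I)M_I=\mu_I+w(I_+)M_{I_+}+w(I_-)M_{I_-}$. The two constructions are in fact linked by an exact algebraic substitution: writing $F_{\mathrm{NTV}}=vF$, $f_{\mathrm{NTV}}=vf$, $A_{\mathrm{NTV}}=vM$, $v_{\mathrm{NTV}}=v$ gives $\cB=vB$ for your $B$ with $C_1=C_2=4$, $\tau=1$, so that $|I|\cB(x_I)=w(I)B(v_I)$ and the telescoping identities coincide. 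What your reformulation buys is a smaller Hessian to check and a domain on which the constraints are cleaner, at the cost of a bookkeeping step about the set $\{w=0\}$ (harmless: put $\psi=0$ there; one then gets $\|\psi\|_{L^2(w)}\le\|\varphi\|_{L^2}$ and may restrict to intervals with $w(I)>0$).

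One point to tighten: you flag the concavity verification and the balancing of constants as ``the most delicate part,'' but you should just carry it out, since the ansatz works with $C_1=C_2=4$, $\tau=1$. Indeed, $g(f,M)=f^2/(1+M)$ has Hessian
\[
\begin{pmatrix} 2/(1+M) & -2f/(1+M)^2 \\ -2f/(1+M)^2 & 2f^2/(1+M)^3 \end{pmatrix},
\]
which is positive semidefinite (rank one), so $B=4F-4g$ is concave on all of $\{F\ge 0\}$, not merely on the restricted domain. For the main inequality, set $M^*:=M-\mu_I/w(I)$; concavity gives $w(I)B(f,F,M^*)\ge w(I_+)B(\cdot_{+})+w(I_-)B(\cdot_{-})$, and a direct computation yields
\[
w(I)\bigl(B(f,F,M)-B(f,F,M^*)\bigr)=\frac{4f^2\,\mu_I}{(1+M)(1+M^*)}\ \ge\ \mu_I f_I^2,
\]
using $0\le M^*\le M\le 1$. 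Telescoping and $0\le B\le 4F$ then give $\sum_I\mu_I f_I^2\le 4\|\psi\|_{L^2(w)}^2\le 4\|\varphi\|_{L^2}^2$, matching the constant in the paper's supersolution $\cB$. So the route is correct and the remaining computation is not delicate; it should simply be included.
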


Clearly, the conclusion of \eqref{E:wCarleson embedding theoremconc} may be rewritten as
	$$\frac{1}{|I_0|} \sum_{I\subset I_0} \alpha_I \La \varphi w\Ra_I^2 \lesssim \La \varphi^2 w\Ra_{I_0}. $$
Letting $\alpha_I = |I|^2$ in \eqref{E:wCarleson embedding theoremtest}, we obtain exactly Theorem \ref{tree1}.

We recall here that the proof of Theorem \ref{T:W-CET} in \cite{NTV99} was based upon the Bellman function
	\begin{equation}\label{E:BellmanDef}
	\cB(F, f, A, v) := 4\bigg(F - \frac{f^2}{v + A}\bigg),
	\end{equation}
and three main properties this function satisfies are:
	\begin{enumerate}
	\item $\cB$ is defined on:
		$$f^2 \leq Fv; A \leq v;$$
	\item $0 \leq \cB \leq CF$, in this case with $C = 4$;
	\item Main Inequality: 
		\begin{equation}\label{MI99}
		\cB(F, f, A, v) - \frac{1}{2}\bigg(\cB(F_{-}, f_{-}, A_{-}, v_{-}) + \cB(F_{+}, f_{+}, A_{+}, v_{+}) \bigg) \geq \frac{f^2}{v^2}m,
		\end{equation}
	for all points in the domain such that
		\begin{equation}
		\label{mart}
		F = \frac{F_{-}+ F_{+}}{2}; \:\: f=\frac{f_{-}+f_{+}}{2}; \:\: v = \frac{v_{-}+v_{+}}{2},
		\end{equation}
	and 
		$$A = m + \frac{A_{-}+A_{+}}{2},$$
	for some $m\geq 0$.
	\end{enumerate}
In particular, we have that the function $\cB$ is \textit{concave}.

\subsection{Carleson embedding theorem on a dyadic tree}
\label{Tree}

Now we wish to prove a version of Theorem \ref{tree1} on a dyadic tree. Specifically, suppose we have a dyadic tree originating at some $I_0 \in \cD$. Define a measure $\Lambda$ on the tree as follows: to each node $I \in \cD(I_0)$ we associate a non-negative number $\lambda_I \geq 0$. We may think of $I \in \cD(I_0)$ as an interval \textit{in the dyadic tree} by considering $\{K \in \cD(I_0): K \subset I\}$. Then we define
	$$\Lambda(I) := \sum_{K\subset I} \lambda _K,$$
and the averaging operator
	$$(\Lambda)_I := \frac{1}{|I|}\Lambda(I).$$
Given a function $\varphi = \{\varphi(I)\}_{I \in \cD(I_0)}$ on the dyadic tree, we have
	$$\int_I \varphi \,d\Lambda = \sum_{K \subset I} \varphi(K)\lambda_K,$$
and
	$$(\varphi\Lambda)_I := \frac{1}{|I|}\int_I \varphi\,d\Lambda.$$

\begin{theorem}[Carleson embedding theorem for a dyadic tree]\label{T:Carleson embedding theorem-tree}
Let $I_0\in\cD$, the dyadic tree originating at $I_0$ with notations as above, and $\{\alpha_I\}_{I\subset I_0}$ be a sequence of non-negative numbers. Then, if
	\begin{equation}\label{E:wCarleson embedding theorem-tree-test}
	\frac{1}{|I|} \sum_{K\subset I} \alpha_K (\Lambda)_K^2 \leq (\Lambda)_I, \quad \forall I \in \cD(I_0),
	\end{equation}
then
	\begin{equation}\label{E:wCarleson embedding theorem-tree-conc}
	\frac{1}{|I_0|} \sum_{I\subset I_0} \alpha_I (\varphi\sqrt{\Lambda})_I^2 \leq 4 (\varphi^2)_{I_0},
	\end{equation}
where
	$$(\varphi\sqrt{\Lambda})_I := \frac{1}{|I|}\sum_{K\subset I} \varphi(K)\sqrt{\lambda_K} \:\:\text{ and }\:\:
		(\varphi^2)_{I_0} := \frac{1}{|I_0|} \sum_{I\subset I_0} \varphi(I)^2. $$
\end{theorem}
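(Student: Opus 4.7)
The plan is to carry out the NTV99 Bellman function argument directly on the tree. For every vertex $I \in \cD(I_0)$ I would take the Bellman variables to be exactly the averages that appear in the statement: $F_I := \tfrac{1}{|I|}\sum_{K\subset I}\varphi(K)^2$, $f_I := (\varphi\sqrt{\Lambda})_I$, $A_I := \tfrac{1}{|I|}\sum_{K\subset I}\alpha_K(\Lambda)_K^2$, and $v_I := (\Lambda)_I$. Cauchy--Schwarz gives $f_I^2 \le F_I v_I$, and the testing hypothesis \eqref{E:wCarleson embedding theorem-tree-test} is \emph{exactly} $A_I \le v_I$; so the point $(F_I,f_I,A_I,v_I)$ stays in the domain of the function $\cB$ from \eqref{E:BellmanDef}, where $0 \le \cB \le 4F$. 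Writing $\cB_I := \cB(F_I,f_I,A_I,v_I)$, I want to prove the tree analogue of \eqref{MI99},
\begin{equation*}
|I|\cB_I - |I_-|\cB_{I_-} - |I_+|\cB_{I_+} \ \ge\ \alpha_I f_I^2
\end{equation*}
for every non-leaf $I$ with children $I_\pm$. Granted this, a bottom-up induction on the (finite) tree telescopes to $|I_0|\cB_{I_0} \ge \sum_{I\subset I_0}\alpha_I f_I^2$, and the desired conclusion \eqref{E:wCarleson embedding theorem-tree-conc} follows from $\cB_{I_0} \le 4 F_{I_0}$.

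The novel feature compared to the classical setting of Theorem~\ref{T:W-CET} is that on the tree, $(F,f,v)$ do \emph{not} satisfy martingale identities --- the vertex $I$ itself contributes to the sums over $K \subset I$, so that $F_I = \bar F + \Delta F_I$ with $\bar F := \tfrac12(F_{I_-}+F_{I_+})$, and likewise for the other three variables, where
\begin{equation*}
\Delta F_I = \tfrac{\varphi(I)^2}{|I|},\ \ \Delta f_I = \tfrac{\varphi(I)\sqrt{\lambda_I}}{|I|},\ \ \Delta v_I = \tfrac{\lambda_I}{|I|},\ \ \Delta A_I = \tfrac{\alpha_I v_I^2}{|I|}.
\end{equation*}
I would decompose $\cB_I - \tfrac12(\cB_{I_-}+\cB_{I_+}) = [\cB_I - \cB(\bar F,\bar f,\bar A,\bar v)] + [\cB(\bar F,\bar f,\bar A,\bar v) - \tfrac12(\cB_{I_-}+\cB_{I_+})]$. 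The second bracket is non-negative by Jensen applied to the concave $\cB$ at the two-point barycenter $(\bar F,\bar f,\bar A,\bar v)$, so the classical concavity of $\cB$ from Section~\ref{Btree} is reused verbatim. For the first bracket, the tangent-hyperplane inequality for concave $\cB$ evaluated at the point $(F_I,f_I,A_I,v_I)$ yields
\begin{equation*}
\cB_I - \cB(\bar F,\bar f,\bar A,\bar v)\ \ge\ \nabla \cB_I \cdot (\Delta F_I, \Delta f_I, \Delta A_I, \Delta v_I).
\end{equation*}

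The main obstacle is then a short algebraic verification that this dot product is bounded below by $\alpha_I f_I^2/|I|$. Writing $u_I := v_I + A_I$ and computing the four partial derivatives of $\cB = 4(F - f^2/(v+A))$, the three pieces involving $\Delta F_I, \Delta f_I, \Delta v_I$ assemble into a perfect square (using $(\Delta f_I)^2 = \Delta F_I \cdot \Delta v_I$), leaving
\begin{equation*}
\nabla\cB_I \cdot \Delta\ =\ \frac{4}{|I|}\left[\Big(\varphi(I) - \frac{f_I \sqrt{\lambda_I}}{u_I}\Big)^2 + \frac{\alpha_I f_I^2 v_I^2}{u_I^2}\right].
\end{equation*}
Discarding the square, the testing hypothesis $A_I \le v_I$ forces $v_I/u_I \ge 1/2$, so the remaining term is at least $\alpha_I f_I^2/4$; after multiplying by $4/|I|$ one gets precisely $\nabla\cB_I \cdot \Delta \ge \alpha_I f_I^2/|I|$. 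The same calculation read at a leaf, with $(\bar F,\bar f,\bar A,\bar v) = (0,0,0,0)$ and $\cB(0,0,0,0) := 0$ understood as a limit, furnishes the base case $|I|\cB_I \ge \alpha_I f_I^2$ and closes the bottom-up induction.
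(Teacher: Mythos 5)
Your proof is correct and follows the same Bellman approach as the paper: the same supersolution $\cB$ from \eqref{E:BellmanDef}, the same variables $(F_I,f_I,A_I,v_I)$, the same intermediate barycentric point $(\widetilde F,\widetilde f,A-c,\widetilde v)$, the Jensen step by concavity, and the identical perfect--square cancellation driven by $(\Delta f_I)^2=\Delta F_I\,\Delta v_I$. The only cosmetic difference is that you apply a single tangent-hyperplane estimate at $x_I$ to absorb all four increments at once, whereas the paper's Lemma \ref{L:BEstimate} first decrements only the $A$-coordinate (producing the $c\,f^2/v^2$ gain) and then shows separately that the $(F,f,v)$-increment contributes a non-positive perfect square.
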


Note that the conclusion of \eqref{E:wCarleson embedding theorem-tree-conc} may be rewritten as
	$$\sum_{I\subset I_0} \alpha_I (\varphi\Lambda)_I^2 \leq 4 (\varphi^2 \Lambda)_{I_0}.$$
Letting $\alpha_I = |I|^2$ in \eqref{E:wCarleson embedding theorem-tree-test}, we obtain:

\begin{cor}\label{C:tree2}
Let $I_0\in\cD$, the dyadic tree originating at $I_0$ with notations as above. Then, if
	$$\frac{1}{|I|} \sum_{K\subset I} |K|^2 (\Lambda)_K^2 \leq (\Lambda)_I, \quad \forall I \in \cD(I_0),$$
then
	$$\frac{1}{|I_0|} \sum_{I\subset I_0} |I|^2 (\varphi\Lambda)_I^2 \leq 4 (\varphi^2 \Lambda)_{I_0}.$$
\end{cor}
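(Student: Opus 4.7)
The plan is to adapt the NTV Bellman function argument for Theorem \ref{T:W-CET} to the tree setting, using the same function $\cB(F, f, A, v) = 4(F - f^2/(v+A))$ from \eqref{E:BellmanDef} together with a small shift that compensates for the non-martingale behavior of averages on the tree.

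For each $I \in \cD(I_0)$, set $F_I := (\varphi^2)_I$, $f_I := (\varphi\sqrt\Lambda)_I$, $v_I := (\Lambda)_I$, and $A_I := \frac{1}{|I|}\sum_{K \subset I}\alpha_K v_K^2$. Cauchy--Schwarz gives $f_I^2 \leq F_I v_I$, and the hypothesis \eqref{E:wCarleson embedding theorem-tree-test} gives $A_I \leq v_I$, so the point $(F_I, f_I, A_I, v_I)$ lies in the domain of $\cB$. Letting $I_\pm$ denote the tree-children of $I$, a direct calculation yields
\[
F_I = \Delta F + \tfrac12(F_{I_-}+F_{I_+}), \ \ f_I = \Delta f + \tfrac12(f_{I_-}+f_{I_+}),
\]
\[
v_I = \Delta v + \tfrac12(v_{I_-}+v_{I_+}), \ \ A_I = m + \tfrac12(A_{I_-}+A_{I_+}),
\]
where $\Delta F := \varphi(I)^2/|I|$, $\Delta f := \varphi(I)\sqrt{\lambda_I}/|I|$, $\Delta v := \lambda_I/|I|$, and $m := \alpha_I v_I^2/|I| \geq 0$. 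The triple $(F, f, v)$ thus fails to satisfy the martingale relations \eqref{mart}, but the pivotal identity $(\Delta F)(\Delta v) = (\Delta f)^2$ will resolve this.

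The main step is to prove the tree analogue of \eqref{MI99}, namely $\cB_I - \tfrac12(\cB_{I_-}+\cB_{I_+}) \geq \alpha_I f_I^2/|I|$, where $\cB_J := \cB(F_J, f_J, A_J, v_J)$. I will do this in two substeps. First, introduce the shifted children $(F'_\pm, f'_\pm, v'_\pm) := (F_{I_\pm}+\Delta F, f_{I_\pm}+\Delta f, v_{I_\pm}+\Delta v)$ with $A$-coordinate unchanged; these have $(F_I, f_I, v_I)$ as their arithmetic mean, and combining $(\Delta F)(\Delta v) = (\Delta f)^2$ with Cauchy--Schwarz at $I_\pm$ and AM--GM gives $(f'_\pm)^2 \leq F'_\pm v'_\pm$. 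Hence the NTV main inequality \eqref{MI99} applies with parent $(F_I, f_I, A_I, v_I)$ and children $(F'_\pm, f'_\pm, A_{I_\pm}, v'_\pm)$, yielding $\cB_I - \tfrac12(\cB'_- + \cB'_+) \geq \alpha_I f_I^2/|I|$. Second, a short algebraic calculation using the identity once more shows
\[
\cB'_\pm - \cB_{I_\pm} = \frac{4\bigl(\Delta f\,(v_{I_\pm}+A_{I_\pm}) - f_{I_\pm}\,\Delta v\bigr)^2}{\Delta v\,(v_{I_\pm}+A_{I_\pm})\,(v_{I_\pm}+\Delta v+A_{I_\pm})} \geq 0,
\]
so replacing $\cB'_\pm$ by $\cB_{I_\pm}$ only strengthens the lower bound, giving the tree main inequality.

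Once this is in hand, telescoping (with leaf contributions handled by the same inequality, which degenerates at leaves to $\cB_L \geq \alpha_L f_L^2/|L|$ and is verified directly) yields $\sum_{I \subset I_0} \alpha_I f_I^2 \leq |I_0|\cB_{I_0} \leq 4|I_0|F_{I_0} = 4\sum_{K \subset I_0}\varphi(K)^2$, which is the conclusion after dividing by $|I_0|$. The chief obstacle is precisely the failure of the martingale relations for $(F, f, v)$ on the tree; the resolution---the shift by $(\Delta F, \Delta f, \Delta v)$ and the algebraic identity $(\Delta F)(\Delta v)=(\Delta f)^2$---captures exactly the contribution of each node $I$ itself and restores the NTV framework at the cost of this one extra argument.
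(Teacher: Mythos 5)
Your proof is correct, and it reaches the same Main Inequality as Lemma~\ref{L:BEstimate} (which is what drives Theorem~\ref{T:Carleson embedding theorem-tree} and hence this corollary) by a genuinely different decomposition. The paper shifts the \emph{parent}: it splits $\cB(x_I) - \tfrac12\big(\cB(x_{I_-}) + \cB(x_{I_+})\big)$ along the chain $x_I \to (F_I,f_I,A_I-c,v_I) \to (\widetilde F_I,\widetilde f_I,A_I-c,\widetilde v_I) \to \tfrac12(x_{I_-}+x_{I_+})$, using the tangent-plane estimate \eqref{E:ConcEstB1} for the $A$-shift, the perfect-square cancellation \eqref{neutr} for the diagonal $(F,f,v)$-shift, and plain concavity for the last step. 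You instead shift the \emph{children}: translating each $x_{I_\pm}$ by $(\Delta F,\Delta f,0,\Delta v)$ restores the martingale relations \eqref{mart} with $m=\alpha_I v_I^2/|I|$, so the dyadic Main Inequality \eqref{MI99} applies as a black box, and your explicit perfect-square formula for $\cB(x'_\pm)-\cB(x_{I_\pm})$ (the child-side counterpart of \eqref{neutr}, driven by the same rank-one identity $\Delta F\,\Delta v=(\Delta f)^2$) shows the shift can only increase $\cB$. Both arguments are sound and of comparable length; yours has the expository advantage of isolating exactly what is new in the tree setting as a single monotonicity step on top of the already-known \eqref{MI99}, while the paper's is more self-contained since it reworks the concavity bound from scratch. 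One minor caveat worth recording: your displayed formula for $\cB'_\pm - \cB_{I_\pm}$ has $\Delta v$ and $v_{I_\pm}+A_{I_\pm}$ in the denominator, so the degenerate cases $\lambda_I=0$ (then $\Delta f=0$ and the difference is simply $4\Delta F\geq 0$) and $\Lambda(I_\pm)=0$ should be noted separately; these are routine, not gaps.
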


The proof of Theorem \ref{T:Carleson embedding theorem-tree} is based also on the function $\cB$ in \eqref{E:BellmanDef}, and on proving a more involved version of \eqref{MI99} -- this will be Lemma \ref{L:BEstimate}. 

For now, let us create a Bellman function for the dyadic tree. Below, we have in the left column the setup for the original Bellman function of \cite{NTV99}, and on the right we construct the Bellman function for our Carleson embedding theorem on the dyadic tree.

\begin{multicols}{2}
\setlength{\columnseprule}{0.4pt}

\begin{center}
\textbf{Classic Weighted CET}
\end{center}
$$\mathbb{B}_1(F, f, A, v) := \sup_{\varphi, w, \alpha} \frac{1}{|I_0|}\sum_{I\subset I_0} \alpha_I \La \varphi\sqrt{w}\Ra_I^2$$
where the supremum is over all functions $\varphi$ on $I_0$, weights $w$ on $I_0$, and $w$-Carleson sequences 
$\alpha = (\alpha_I)_{I\subset I_0}$ such that:
	\begin{itemize}
	\item $\La\varphi^2\Ra_{I_0} = F$
	\item $\La\varphi\sqrt{w}\Ra_{I_0} = f$
	\item $\frac{1}{|I_0|}\sum_{I\subset I_0}\alpha_I\La w\Ra_I^2 = A$
	\item $\La w\Ra_{I_0} = v.$
	\end{itemize}

\vfill\null
\columnbreak

\begin{center}
\textbf{CET on Dyadic Tree}
\end{center}
$$\mathbb{B}_2(F, f, A, v) := \sup_{T, \varphi, \Lambda, \alpha} \frac{1}{|I_0|}\sum_{I\subset I_0} \alpha_I  (\varphi\sqrt{\Lambda})_I^2$$
where the supremum is over all dyadic trees $T$ originating at $I_0$, measures $\Lambda$ on $T$, functions $\varphi$ on $T$ and non-negative sequences $\alpha = (\alpha_I)_{I\subset I_0}$ such that:
	\begin{itemize}
	\item $(\varphi^2)_{I_0} = F$
	\item $(\varphi\sqrt{\Lambda})_{I_0} = f$
	\item $\frac{1}{|I_0|}\sum_{I\subset I_0} \alpha_I (\Lambda)_I^2 = A$
	\item $(\Lambda)_{I_0} = v$.
	\end{itemize}
\end{multicols}

Both will be defined on $\{f^2 \leq Fv; A \leq v\}$, and both will satisfy some Main Inequality -- which will turn out to be the fundamental distinction between the two. Let us also mention here that the function $\cB$ in \eqref{E:BellmanDef} is not the ``true'' Bellman function $\bB_1$ above, but a \textit{supersolution}. This refers in this case to any function which satisfies properties (1) -- (3) in Section \ref{Btree}. The true Bellman function $\bB_1$ was found in \cite{VV09}.

Now let us discuss the Main Inequalities for these functions. For $\bB_1$, \eqref{MI99} was obtained in the usual way, by running the Bellman machine separately on each half of an interval $I_0$ -- see Figure \ref{fig:Pic2}(A): choose weights $w_{\pm}$ and functions $\varphi_{\pm}$ supported on $I_0^{\pm}$, and $w_{\pm}$-Carleson sequences $\alpha_{\pm} = (\alpha_I)_{I\subset I_0^{\pm}}$, in such a way that they give the supremum for $\bB_1(F_{\pm}, f_{\pm}, A_{\pm}, v_{\pm})$ up to some $\epsilon$.

\begin{figure}[h!]
  \centering
  \begin{subfigure}[b]{0.45\linewidth}
    \includegraphics[width=\linewidth]{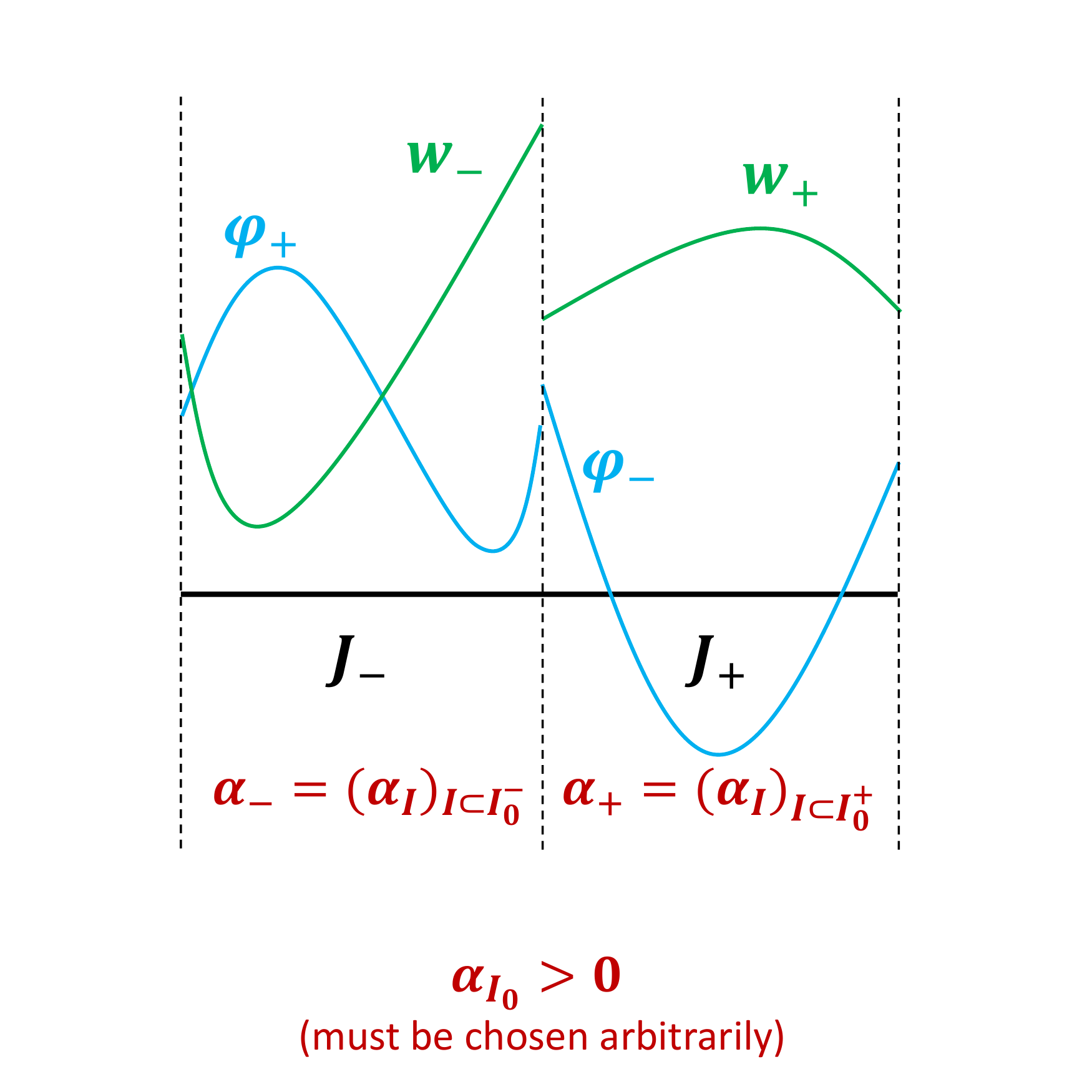}
      \caption{Main Inequality for $\bB_1$ -- classic case}
  \end{subfigure}
  \begin{subfigure}[b]{0.45\linewidth}
    \includegraphics[width=\linewidth]{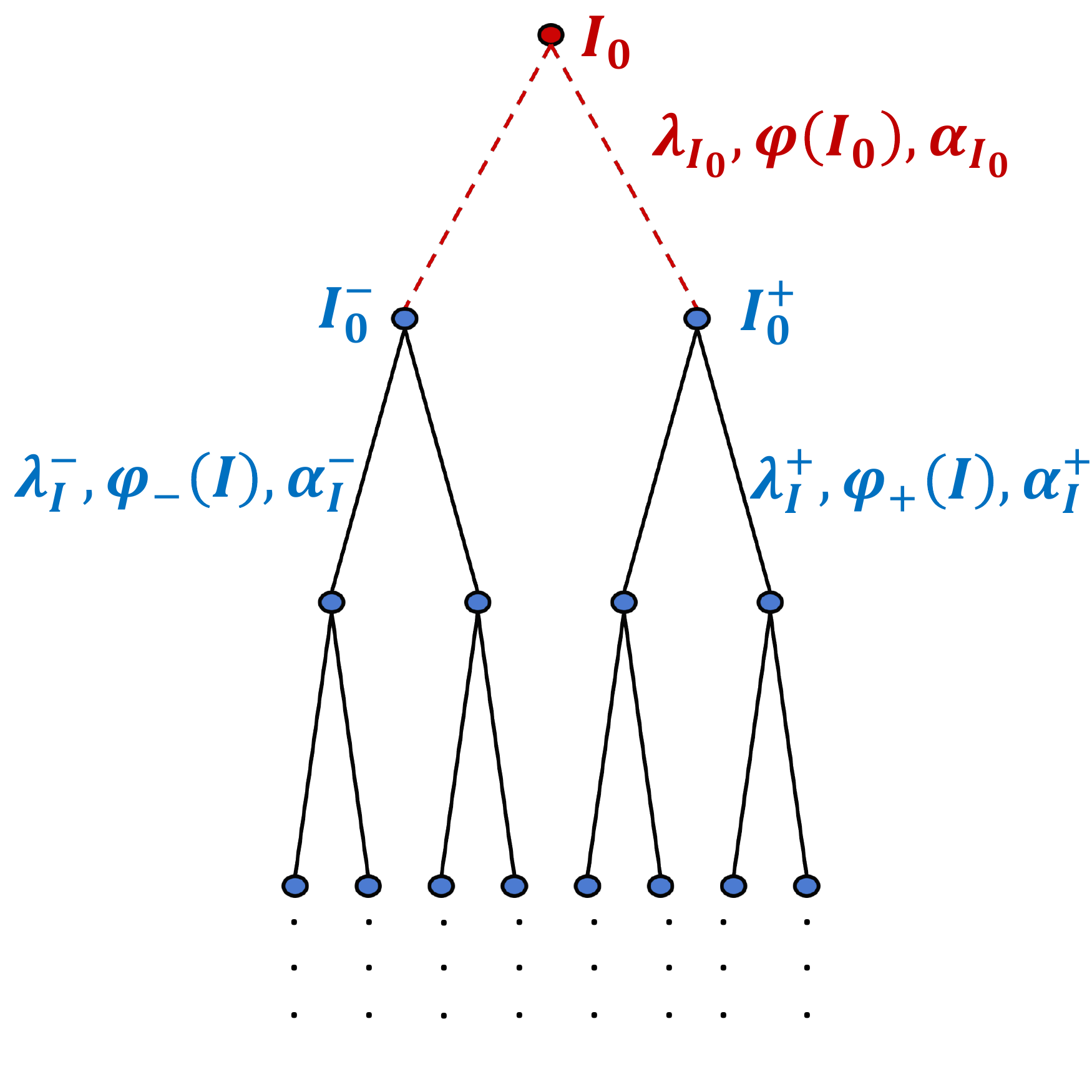}
    \caption{Main Inequality for $\bB_2$ -- dyadic tree case}
  \end{subfigure}
  \caption{Deducing the Main Inequalities for $\bB$}
  \label{fig:Pic2}
\end{figure}

One then easily obtains a weight $w$ and a function $\varphi$ on $I_0$ by concatenation. In the case of the sequence $\alpha$, one must choose an $\alpha_J > 0$ arbitrarily though, as long as the resulting sequence remains $w$-Carleson. This is what produces the $m$ term in \eqref{MI99}.

Now let us turn to $\bB_2$ and proceed similarly: take two dyadic trees $T_{\pm}$ originating at $I_0^{\pm}$, each equipped with measures
$\Lambda_{\pm} = \{\lambda_I^{\pm}\}_{I\subset I_0^{\pm}}$, two function $\varphi_{\pm} = \{\varphi_{\pm}(I)\}_{I\subset I_0^{\pm}}$ on the trees, and non-negative sequences $\alpha_{\pm} = \{\alpha_I\}_{I\subset I_0^{\pm}}$ such that:
	$$F_{\pm} = (\varphi_{\pm}^2)_{I_0^{\pm}}; \:\: f_{\pm} = (\varphi_{\pm}\sqrt{\Lambda_{\pm}})_{I_0^{\pm}}; \:\:
		A_{\pm} = \frac{1}{|I_0^{\pm}|} \sum_{K\subset I_0^{\pm}} \alpha_K^{\pm}(\Lambda_{\pm})^2_K; \:\:
		v_{\pm} = (\Lambda_{\pm})_{I_0^{\pm}},$$
and such that:
	$$\bB_2(F_{\pm}, f_{\pm}, A_{\pm}, v_{\pm}) - \epsilon < \frac{1}{|I_0^{\pm}|} \sum_{I \subset I_0^{\pm}} \alpha_I^{\pm}
	(\varphi_{\pm}\sqrt{\Lambda_{\pm}})^2_{I}.$$
	
We can ``concatenate'' the two trees into a new dyadic tree $T$ centered at $I_0$ -- see Figure \ref{fig:Pic2}(B) --
but, here is the major difference from the usual dyadic situation: $\lambda_{I_0}$, $\varphi(I_0)$ and $\alpha_{I_0}$
must all be assigned to $I_0$, they do not pre-exist. So let some arbitrary $\lambda_{I_0}\geq 0$, $\alpha_{I_0} \geq 0$ and
$\varphi(I_0)\in\bR$. Now we have a new tree $T$, a measure $\Lambda$, a function $\varphi$ and a sequence $\alpha$. 
Next step is to figure out what $F, f, A$,  and $v$ must be, through straightforward calculations.

\begin{align*}
&F = (\varphi^2)_{I_0} = b^2_{I_0} + \frac{1}{2}(F_{-}+F_{+}) \text{, where } b_{I_0} = \frac{\varphi(I_0)}{\sqrt{|I_0|}}\\
&f = (\varphi \sqrt{\Lambda})_{I_0} = a_{I_0}b_{I_0} + \frac{1}{2}(f_{-}+f_{+}) \text{, where } a_{I_0} 
	= \frac{\sqrt{\lambda_{I_0}}}{\sqrt{|I_0|}}\\
& A = \frac{1}{|I_0|} \sum_{K\subset I_0} \alpha_K (\Lambda)^2_K = c_{I_0} + \frac{1}{2}(A_{-}+A_{+}) \text{, where }
	c_{I_0} = \frac{1}{|I_0|}\alpha_{I_0}(\Lambda)^2_{I_0}\\
& v = (\Lambda)_{I_0} = a_{I_0}^2 + \frac{1}{2}(v_{-}+v_+)
\end{align*}

The tree $T$, along with $\Lambda, \varphi$, and $\alpha$ are then admissible for $\bB_2(F, f, A, v)$, and:
	\begin{eqnarray*}
	\bB_2(F, f, A, v) &\geq& \frac{1}{|I_0|} \sum_{I \subset I_0} \alpha_I (\varphi\sqrt{\Lambda})^2_I\\
		&=& \frac{c_{I_0}f^2}{v^2} + \frac{1}{2}\bigg(
			\frac{1}{|I_0^{-}|} \sum_{I\subset I_0^-} \alpha^-_I(\varphi_-\sqrt{\Lambda_-})^2_I
			+ \frac{1}{|I_0^{+}|} \sum_{I\subset I_0^+} \alpha^+_I(\varphi_+\sqrt{\Lambda_+})^2_I\bigg)\\
		&>& \frac{c_{I_0}f^2}{v^2} + \frac{1}{2}\bigg(
		\bB_2(F_-, f_-, A_-, v_-) + \bB_2(F_+, f_+, A_+, v_+) \bigg) - \epsilon.
	\end{eqnarray*}

Therefore, we have the Main Inequality for $\bB_2$:
	\begin{equation}\label{E:MI18}
	c\frac{f^2}{v^2} \leq \bB_2(F, f, A, v) - \frac{1}{2}\bigg(\bB_2(F_{-}, f_{-}, A_{-}, v_{-}) + \bB_2(F_{+}, f_{+}, A_{+}, v_{+}) \bigg),
	\end{equation}
for all quadruplets in the domain of $\bB_2$ such that
	\begin{equation}\label{E:quads}
	F = \widetilde{F} + b^2; \:\: f = \widetilde{f} + ab; \:\: A = \widetilde{A} + c; \:\: v = \widetilde{v} + a^2,
	\end{equation}
and
	$$\widetilde{F} := \frac{F_{-}+F_{+}}{2}; \:\: \widetilde{f} := \frac{f_{-}+ f_{+}}{2}; \:\:
	\widetilde{A} := \frac{A_{-}+A_{+}}{2}; \:\: \widetilde{v} := \frac{v_{-}+v_{+}}{2},$$
and $a\geq 0$, $b \in \mathbb{R}$, $c\geq 0$ are some real numbers.

\begin{lemma}\label{L:BEstimate}
The function $\cB$ in \eqref{E:BellmanDef} satisfies the Main Inequality above in \eqref{E:MI18}.
\end{lemma}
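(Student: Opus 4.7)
The plan is to exploit the concavity of $\cB$ together with the domain condition $A\le v$. Since $\cB(F,f,A,v)=4F-4f^2/(v+A)$ is affine in $F$, and $f^2/(v+A)$ is jointly convex in $(f,v+A)$, the function $\cB$ is concave on its domain---this is already flagged in the remark following property (3) above.

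First I would write the tangent hyperplane inequality for concave $\cB$ at the point $x=(F,f,A,v)$, evaluated at $x_\pm=(F_\pm,f_\pm,A_\pm,v_\pm)$, and then average over $\pm$. Since \eqref{E:quads} says precisely that $(\widetilde F,\widetilde f,\widetilde A,\widetilde v)=(F,f,A,v)-(b^2,ab,c,a^2)$, the averaging collapses to
\[
\cB(F,f,A,v)-\frac12\bigl(\cB(F_-,f_-,A_-,v_-)+\cB(F_+,f_+,A_+,v_+)\bigr)\ \ge\ \nabla\cB(F,f,A,v)\cdot(b^2,ab,c,a^2).
\]

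Next I would compute the gradient explicitly as $\nabla\cB=\bigl(4,\,-8f/(v+A),\,4f^2/(v+A)^2,\,4f^2/(v+A)^2\bigr)$. Pairing against $(b^2,ab,c,a^2)$ and grouping the $b$-dependent contributions into a perfect square gives
\[
\nabla\cB\cdot(b^2,ab,c,a^2)=4\Bigl(b-\frac{fa}{v+A}\Bigr)^2+\frac{4f^2c}{(v+A)^2}\ \ge\ \frac{4f^2c}{(v+A)^2}.
\]
Finally I would invoke the domain constraint $A\le v$, which yields $v+A\le 2v$ and hence $4f^2c/(v+A)^2\ge cf^2/v^2$, exactly matching the right-hand side of \eqref{E:MI18}.

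The main obstacle is cosmetic rather than substantive: one needs to check that all intermediate points remain in the domain of $\cB$ (in particular $f_\pm^2\le F_\pm v_\pm$ and $A_\pm\le v_\pm$), which is precisely what the admissibility hypotheses on the quadruplets guarantee. If one prefers a route that avoids any direct appeal to multivariable concavity, the same bound can be produced in two separate steps: Jensen applied to the jointly convex function $(f,s)\mapsto f^2/s$ dominates $\frac12(f_-^2/(v_-+A_-)+f_+^2/(v_++A_+))$ from below by $\widetilde f^2/(\widetilde v+\widetilde A)$, and then a single tangent-line estimate for $f^2/s$ at the base point $(f,v+A)$---with the $b^2$ term absorbing the resulting cross term---collapses the algebra to the identical perfect square, and $A\le v$ again closes the final inequality.
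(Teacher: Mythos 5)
Your proof is correct and uses the same key ingredients as the paper's: the tangent-plane (first-order concavity) estimate for $\cB$, the perfect-square identity arising from the gradient pairing against $(b^2,ab,c,a^2)$, and the domain constraint $A\le v$ to convert $(v+A)^2$ into $4v^2$. The only difference is organizational---you apply a single tangent-plane estimate at $(F,f,A,v)$ averaged over $\pm$, whereas the paper routes through the intermediate points $(F,f,A-c,v)$ and $(\widetilde F,\widetilde f,A-c,\widetilde v)$ in three steps; the underlying argument is essentially the same.
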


Before we prove this lemma, let us see how it proves Theorem \ref{T:Carleson embedding theorem-tree}.

\begin{proof}[Proof of Theorem \ref{T:Carleson embedding theorem-tree}]

For every $I \in \cD(I_0)$ define:
	\begin{eqnarray*}
	 v_I &:=& (\Lambda)_{I} = \frac{1}{|I|}\lambda_I + \frac{1}{2}(v_{I_{-}}+ v_{I_{+}})
	 	= a_I^2 + \widetilde{v}_I,
	 	\text{ where } a_I := \sqrt{\frac{\lambda_I}{|I|}};\\
	 F_I &:=& (\varphi^2)_I = \frac{1}{|I|}\varphi(I)^2 + \frac{1}{2}(F_{I_{-}}+F_{I_{+}}) 
	 	= b_I^2 +\widetilde{F}_I,
	 	\text{ where } b_I := \frac{\varphi(I)}{\sqrt{|I|}};\\
	 f_I &:=& (\varphi\sqrt{\Lambda})_I = \frac{\varphi(I)\sqrt{\lambda_I}}{|I|} + \frac{1}{2}(f_{I_{-}}+f_{I_{+}}) 
	 	= a_I b_I + \widetilde{f}_I;\\
	 A_I &:=& \frac{1}{|I|}\sum_{K\subset I} \alpha_K (\Lambda)_K^2 = \frac{\alpha_I(\Lambda)_I^2}{|I|} + 
	 	\frac{1}{2}(A_{I_{-}}+ A_{I_{+}})
		 = c_I + \widetilde{A}_I, \text{ where } c_I := \frac{\alpha_I (\Lambda)_I^2}{|I|}.
	\end{eqnarray*}
Note then that
	$$\cB(F_{I}, f_{I}, A_{I}, v_{I}) = \cB(b_I^2 + \widetilde{F}_I, a_Ib_I + \widetilde{f}_I, c_I + \widetilde{A}_I,
		a_I^2 + \widetilde{v}_I),$$
so we may apply Lemma \ref{L:BEstimate} and obtain
	$$\alpha_I f_I^2 \leq |I|\cB(x_I) - |I_{-}|\cB(x_{I_{-}}) - |I_{+}|\cB(x_{I_{+}}).$$
Summing over $I\in\cD(I_0)$ and using the telescoping nature of the sum, we have
	$$\sum_{I\in I_0} \alpha_I f_I^2 \leq |I_0|\cB(F_{I_0}, f_{I_0}, A_{I_0}, v_{I_0}) \leq 4|I_0|F_{I_0},  $$
which is exactly \eqref{E:wCarleson embedding theorem-tree-conc}.
\end{proof}

\begin{remark}
	Before we proceed with the proof of Lemma \ref{L:BEstimate} , let us note that the big and essential difference with Theorem \ref{test1}  now is that in the proof of Theorem \ref{test1} $\{v_I\}_{I\in \cD}$, 
	$\{f_I\}_{I\in \cD}$,
	$\{F_I\}_{I\in \cD}$ are all martingales. This is the standard situation, and it is pictured in Figure \ref{fig:Pic3} (A).	

\begin{figure}[h!]
\centering
\includegraphics[scale=.45]{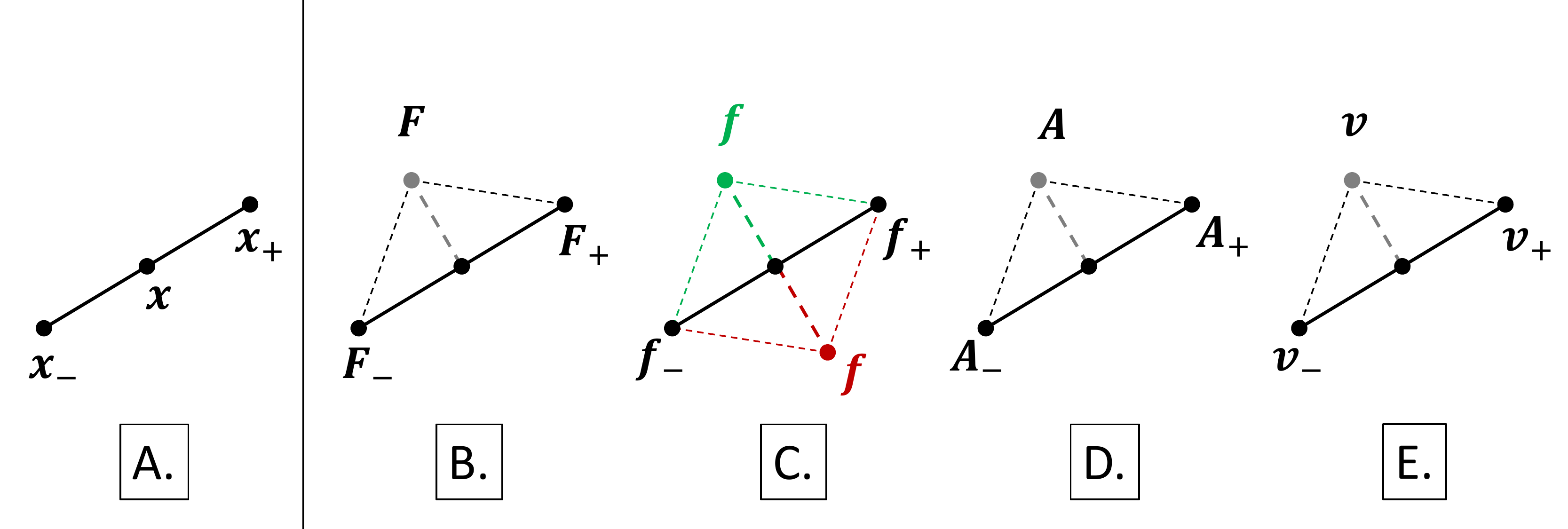}
\caption{Deducing the Main Inequalities for $\bB$}
\label{fig:Pic3}
\end{figure}

	Now looking at \eqref{E:quads}, they  are only \textit{supermartingales} in the case of the $F$, $A$ and $v$ variables,
	and even worse, in the case of $f$ we can have either a supermartingale or a \textit{submartingale}! In other words, we do not have the property \eqref{mart} anymore.	
	Instead, as pictured in Figure \ref{fig:Pic3} (B) -- (E),
	$$
	F_I \ge \frac{F_{I_-}+ F_{I_+}}{2}; \:\: A_I\ge \frac{A_{I_-}+A_{I_+}}{2}; \:\: v_I \ge \frac{v_{I_-}+v_{I_+}}{2},
         $$
 and       
       $$f_I\ge \text{ or } \leq \frac{f_{I_-}+f_{I_+}}{2}.$$

	This is an essential difference, because there is less cancellation, and, indeed, 
	the  fact that $\{f_I\}_{I\in \cD}$ can be both a super or a submartingale can destruct the whole proof. 
	As a small miracle the ``good" supermartingale properties of $\{v_I\}_{I\in \cD}$, $\{F_I\}_{I\in \cD}$ and $\{A_I\}_{I\in\cD}$
	allow us to neutralize the ``bad" sub/supermartingale property of $\{f_I\}_{I\in \cD}$. 	
	The exact calculation of this ``good"--``bad" interplay will be in \eqref{neutr}. 
	We wish to explain now why some variables, the supermartingales 
	$F_I$, $A_I$, $v_I$, are good and some, namely,  $f_I$ is bad.
The explanation is simple: the good ones are those that give positive partial derivative  
of $\cB$, the bad is the one that gives a negative partial derivative	of $\cB$. In fact,
\begin{align*}
&\frac{\pd \cB}{\pd A} \ge 0,\,\,\,\frac{\pd \cB}{\pd v} \ge 0,\,\,\,\frac{\pd \cB}{\pd F}=4,
\\
&\frac{\pd \cB}{\pd f} \le  0\,.
\end{align*}
\end{remark}

\begin{proof}[Proof of Lemma \ref{L:BEstimate}]

Recall that if $g$ is a concave, differentiable function on a convex domain $S\subset \mathbb{R}^n$, then
	$$g(x) - g(x^*) \leq \sum_{i=1}^n \frac{\partial g}{\partial x_i}(x^*) \cdot (x_i - x_i^*),$$
for all $x, x^* \in S$. Denoting $x := (F, f, A, v)$, for the function $\cB$, this takes the particular form:
	\begin{eqnarray}\label{E:GenConcaveEstimate}
	\frac{1}{4}\big(\cB(x) - \cB(x^*)\big) &\leq& (F - F^*) - \frac{2f^*}{v^*+A^*} (f - f^*) \nonumber \\
				&& + \frac{(f^*)^2}{(v^* + A^*)^2} (A - A^*) + \frac{(f^*)^2}{(v^* + A^*)^2} (v - v^*).
	\end{eqnarray}
In particular:
	\begin{equation}\label{E:ConcEstB1}
	\frac{1}{4}\bigg(\cB(F, f, A, v) - \cB(F, f, A-c, v)\bigg) \geq c\frac{f^2}{(v+A)^2} \geq c \frac{f^2}{4v^2},
	\end{equation}
where the last inequality follows because $0 \leq A \leq v$.

By \eqref{E:ConcEstB1}:
	$$c\frac{f^2}{v^2} \leq \bigg(\cB(F, f, A, v) - \cB(\widetilde{F}, \widetilde{f}, A-c, \widetilde{v}) \bigg) +
	\bigg(\cB(\widetilde{F}, \widetilde{f}, A-c, \widetilde{v}) - \cB(F, f, A-c, v) \bigg).$$
We claim that the term in the second parenthesis is negative: apply \eqref{E:GenConcaveEstimate} to obtain
	$$
	\frac{1}{4}\big(\cB(\widetilde{F}, \widetilde{f}, A-c, \widetilde{v}) - \cB(F, f, A-c, v) \big)
	$$
	\begin{align}
	\label{neutr}
	& \leq	-b^2 -\frac{2f}{v+A-c} (-ab) + \frac{f^2}{(v+A-c)^2}(-a^2)
	\\
	&	 -\big(b - \frac{af}{v+A-c} \big)^2 \leq 0.         \notag                          
	\end{align}
Then
	\begin{eqnarray*}
	c\frac{f^2}{v^2} &\leq& \bigg(\cB(F, f, A, v) - \cB(\widetilde{F}, \widetilde{f}, A-c, \widetilde{v}) \bigg)\\
		&\leq& \cB(F, f, A, v) - \frac{1}{2} \bigg(\cB(F_{-}, f_{-}, A_{-}, v_{-}) 
			+ \cB(F_{+}, f_{+}, A_{+}, v_{+}) \bigg),
	\end{eqnarray*}
where $(A_{-}+A_{+})/2 = A - c$, and the last inequality follows by concavity of $\cB$. This proves the lemma.
\end{proof}

\bigskip

\section{Maximal theorem on a tree}
\label{maximal}

Now we are going to prove the result slightly more general than Corollary \ref{C:tree2} from the previous section.

\begin{theorem}
\label{Max:tree}
Let $I_0\in\cD$, the dyadic tree originating at $I_0$ with notations as above. Then, if
	$$\frac{1}{|I|} \sum_{K\subset I} |K|^2 (\Lambda)_K^2 \leq (\Lambda)_I, \quad \forall I \in \cD(I_0),$$
then
	$$\frac{1}{|I_0|} \sum_{I\subset I_0} |I|^2(\Lambda)_I^2 \sup_{K:\, I\subset K}\bigg(\frac{(\varphi\Lambda)_K}{(\Lambda)_K}\bigg)^2 \lesssim (\varphi^2 \Lambda)_{I_0}.$$
\end{theorem}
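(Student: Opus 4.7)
The plan is to linearize the supremum by a standard principal-interval (stopping-time) decomposition, apply the Carleson hypothesis inside each stopping generation in the manner of Corollary~\ref{C:tree2}, and close with Doob's $L^2$ maximal inequality on the dyadic tree. Writing $M\vf(I) := \sup_{K:\, I\subset K}(\vf\Lambda)_K/(\Lambda)_K$ and replacing $\vf$ by $|\vf|$ if needed, I may assume $\vf\ge 0$.

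First I would construct the stopping family $\cF\subset\cD(I_0)$: put $I_0\in\cF$, and inductively declare the $\cF$-children of $S\in\cF$ to be the maximal $S'\subsetneq S$ with $(\vf\Lambda)_{S'}/(\Lambda)_{S'} > 2(\vf\Lambda)_S/(\Lambda)_S$. For $I\in\cD(I_0)$ let $\pi(I)\in\cF$ denote its smallest stopping ancestor. The key pointwise estimate
\[
M\vf(I) \;\le\; 2\,\frac{(\vf\Lambda)_{\pi(I)}}{(\Lambda)_{\pi(I)}}
\]
splits into two cases. For $K$ with $I\subset K\subset\pi(I)$, such $K$ is not contained in any stopping child of $\pi(I)$ (since $I$ is not), so the maximality in the construction forces $(\vf\Lambda)_K/(\Lambda)_K\le 2(\vf\Lambda)_{\pi(I)}/(\Lambda)_{\pi(I)}$. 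For $K\supsetneq\pi(I)$, the ratios along the stopping chain from $\pi(I)$ upward drop by at least a factor of $2$ at each step, so combined with the factor-$2$ maximality bound at the smallest stopping interval containing $K$ one gets $(\vf\Lambda)_K/(\Lambda)_K\le (\vf\Lambda)_{\pi(I)}/(\Lambda)_{\pi(I)}$.

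Next, grouping the sum by $\pi(I)$ and using $|I|^2(\Lambda)_I^2=\Lambda(I)^2$, the Carleson hypothesis $\sum_{I\subset S}\Lambda(I)^2\le\Lambda(S)$ applied on each $S\in\cF$ yields
\[
\sum_{I\subset I_0}|I|^2(\Lambda)_I^2\,M\vf(I)^2 \;\le\; 4\sum_{S\in\cF}\Lambda(S)\left(\frac{(\vf\Lambda)_S}{(\Lambda)_S}\right)^2.
\]
The stopping rule automatically implies the sparseness $\sum_{S'\in\ch(S)}\Lambda(S')\le\Lambda(S)/2$: sum the inequality $\int_{S'}\vf\,d\Lambda>2(\vf\Lambda)_S\Lambda(S')/(\Lambda)_S$ over children, and use $\sum_{S'}\int_{S'}\vf\,d\Lambda\le\int_S\vf\,d\Lambda$. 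Consequently the sets $E(S):=S\setminus\bigcup_{S'\in\ch(S)}S'$ partition $I_0$ with $\Lambda(E(S))\ge\Lambda(S)/2$, and the stopped projection $\Psi_{\cF}(I):=(\vf\Lambda)_{\pi(I)}/(\Lambda)_{\pi(I)}$ is dominated pointwise by the dyadic $\Lambda$-maximal function $M_{\Lambda}\vf$. Therefore
\[
\sum_{S\in\cF}\Lambda(S)\left(\frac{(\vf\Lambda)_S}{(\Lambda)_S}\right)^2 \le 2\int_{I_0}|\Psi_{\cF}|^2\,d\Lambda \le 2\|M_{\Lambda}\vf\|_{L^2(\Lambda)}^2 \lesssim \|\vf\|_{L^2(\Lambda)}^2 = |I_0|(\vf^2\Lambda)_{I_0},
\]
the last inequality being Doob's $L^2$ maximal theorem. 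Dividing by $|I_0|$ finishes the proof.

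The main thing to get right is the pointwise bound on $M\vf$: it genuinely uses the correct direction of the stopping doubling (ratios grow going down), so that the supremum over ancestors $K\supset I$ is controlled by the ratio at $\pi(I)$. The Doob step is essentially routine but requires a clean convention in which $\vf$ is viewed as a node function and $\Lambda=\sum_I\lambda_I\delta_I$ is the associated atomic measure on the tree.
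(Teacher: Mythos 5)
Your proposal is correct, and the opening stopping-time decomposition is essentially the same as the paper's: your $\cF$ with $\pi(I)$ and $E(S)$ corresponds exactly to the paper's stopping vertices $\mathcal{S}$ with $O_H$, the factor-$4$ bound on $M\vf(I)^2$ in terms of the ratio at $\pi(I)$ matches the paper's bound on $O_H$, and both use the one-interval Carleson hypothesis $\sum_{I\subset S}\Lambda(I)^2\le\Lambda(S)$ on each stopping block. Where you diverge is in the closing step. The paper extracts from the disjointness of the $O_H$ a new Carleson sequence $\beta_H := \Lambda(O_H)$, sets $\alpha_H := \beta_H/(\Lambda)_H^2$, and feeds this into Theorem \ref{T:Carleson embedding theorem-tree} --- i.e., it bootstraps the Bellman-proved embedding theorem. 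You instead observe that the stopping rule yields the sparseness $\sum_{S'\in\ch(S)}\Lambda(S')\le\Lambda(S)/2$, majorize the stopped projection $\Psi_{\cF}$ by the dyadic $\Lambda$-maximal function, and close with Doob's $L^2$ maximal inequality. Both routes are valid and give the same constant up to unimportant factors; yours is more self-contained in that it replaces the appeal to the Bellman machinery by a one-line martingale estimate, while the paper's choice has the expository advantage of exhibiting the Carleson embedding theorem as a reusable black box. Two small points to keep explicit in a final write-up: (i) the reduction to $\vf\ge 0$ is needed for the stopping doubling to behave, and your sparseness derivation silently uses it (the paper's proof has the same implicit assumption); (ii) the Doob bound $\|M_\Lambda\vf\|_{L^2(\Lambda)}\le 2\|\vf\|_{L^2(\Lambda)}$ should be justified on the tree by naming the filtration whose atoms at level $n$ are the subtrees rooted at generation-$n$ vertices together with the singleton vertices of smaller generation, so that $\frac{1}{\Lambda(K)}\int_K\vf\,d\Lambda$ is the conditional expectation and $M_\Lambda\vf$ is dominated by the corresponding Doob maximal function.
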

 
\noindent The proof -- for a change -- is a stopping time proof and not a Bellman proof.
 
 \begin{proof}
For every vertex $H$ of the tree, let us introduce the set of vertices $E_H$. Namely, let $J$ be the first successor of $H$
such that
$$
\frac{(\varphi\Lambda)_J}{(\Lambda)_J } \ge 2 \frac{(\varphi\Lambda)_H}{(\Lambda)_H}\,.
$$
It may happen of course that $J$ is not alone, and there are several first successors with this property.
We call by $E_H$ all vertices that are successors of all these $J'$s and also all such $J's$.

Now we introduce another set of vertices associated with $H$. Consider all successors of $H$ which are not in $E_H$.
All of them plus $H$ itself form the collection that is called $O_H$. This set in never empty (it contains $H$) and can include
 all successors of $H$.
 
 Now we first assign $H=I_0$ and  let $\{J\}$ be the first successors of $H$ with the property above. We call this family stopping 
 vertices of first generation, and denote it by $\mathcal{S}_1$. Then for any $H\in \mathcal{S}_1$ we repeat the procedure thus having stopping vertices of the second generation: $\mathcal{S}_2$.
 
 For each $j$ and each $H\in \mathcal{S}_j$, we have $E_H$ and $O_H$. Notice that all such $O_H$ are disjoint. We call $I_0$ the stopping vertex of $0$ generation, and let $\mathcal{S}= \cup_{j=0}^\infty \mathcal{S}_j$.
 
 Then
 \begin{align*}
 &\sum_{I\subset I_0} |I|^2(\Lambda)_I^2 \sup_{K:\, I\subset K}\bigg(\frac{(\varphi\Lambda)_K}{(\Lambda)_K}\bigg)^2
 \\
 &= \sum_{H\in \mathcal{S}} \sum_{I\in O_H}  |I|^2(\Lambda)_I^2 \sup_{K:\, I\subset K}\bigg(\frac{(\varphi\Lambda)_K}{(\Lambda)_K}\bigg)^2
 \\
 & \leq 4 \sum_{H\in \mathcal{S}}\bigg(\frac{(\varphi\Lambda)_H}{(\Lambda)_H}\bigg)^2  \sum_{I\in O_H}  |I|^2(\Lambda)_I^2
 \\
 & \leq 4 \sum_{H\in \mathcal{S}}\bigg(\frac{(\varphi\Lambda)_H}{(\Lambda)_H}\bigg)^2  \Lambda(H)\,.
 \end{align*}
 The last inequality uses the assumptions of the theorem. But notice that by definition of $E_H$ we easily get
 $$
 \Lambda(E_H) \le \frac{1}{2}\Lambda(H)\Rightarrow \,\, \Lambda(H) \le 2 \Lambda(O_H)\,.
 $$
 Hence
 \begin{equation}\label{E:MaxPf1}
 \sum_{I\subset I_0} |I|^2(\Lambda)_I^2 \sup_{H:\, I\subset H}\bigg(\frac{(\varphi\Lambda)_I}{(\Lambda)_I}\bigg)^2 
   \leq 8 \sum_{H\in \mathcal{S}}\bigg(\frac{(\varphi\Lambda)_H}{(\Lambda)_H}\bigg)^2  \Lambda(O_H).
  \end{equation}
 
 
 Now, define $\beta_H$ for $H \subset I_0$ by $\beta_H := \Lambda(O_H)$ if $H \in \mathcal{S}$, and $\beta_H := 0$ otherwise. Note that, by disjointness of $O_H$, we have
 	\begin{equation*}\label{E:MaxPf2}
	\sum_{H \subset K} \beta_H \leq \Lambda(K), \:\: \forall K \subset I_0.
	\end{equation*}
 Therefore, if we let
 	$$\alpha_H := \frac{\beta_H}{(\Lambda)_H^2}$$
then the sequence $\alpha_H$ satisfies the requirements of the Carleson Embedding Theorem \ref{T:Carleson embedding theorem-tree} for the dyadic tree. So we may rewrite the right hand side of \eqref{E:MaxPf1} in terms of $\beta_H$ and apply Theorem \ref{T:Carleson embedding theorem-tree}:
	\begin{align*}
 	& \frac{1}{|I_0|} \sum_{I\subset I_0} |I|^2(\Lambda)_I^2 \sup_{H:\, I\subset H}\bigg(\frac{(\varphi\Lambda)_I}{(\Lambda)_I}\bigg)^2 
 	\\
 	& \leq 8 \frac{1}{|I_0|} \sum_{H\subset I_0} \alpha_H (\varphi\Lambda)^2_H
 	\\
 	& \leq 32 (\varphi^2\Lambda)_{I_0},
	 \end{align*}
completing the proof of Theorem \ref{Max:tree}.
 
\end{proof}


\section{Two-dimensional version of Theorem \ref{tree1} and dyadic rectangles}

\begin{theorem}\label{t:th3.11}
Let $\mu$ be a positive measure on $R^0 = [0,1)^2$. Let $\langle \mu\rangle_{R}$ denote $\frac{\mu(R)}{|R|}$. Let $\varphi$ be a measurable test function. Then, if
\begin{equation}\label{e:511}
\sum_{Q\subset E,\; Q\in \mathcal{D}(R^0)}\langle \mu\rangle_Q^2|Q|^2 \leq \mu(E),\quad \forall\;E \subset \partial T^2,
\end{equation}
then
\begin{equation}\label{e:521}
\sum_{Q\in \mathcal{D}(R^0)} \langle \varphi \mu\rangle^2_{Q}|Q|^2 \lesssim \langle\varphi^2 \mu\rangle_{R^0}|R^0|.
\end{equation}
\end{theorem}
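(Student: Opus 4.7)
The Bellman proofs of Theorems~\ref{T:W-CET}--\ref{T:Carleson embedding theorem-tree} rely on the (super)martingale structure along a one-parameter dyadic filtration, which is absent on the bi-tree since each $Q=I\times J$ has four descendants and two ancestors. My plan is therefore to use a stopping-time / decomposition argument, in the spirit of the proof of Theorem~\ref{Max:tree}, exploiting critically that the hypothesis \eqref{e:511} tests over \emph{arbitrary} subsets $E\subset\partial T^2$ and not only over dyadic rectangles.

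After the trivial rewrite $\sum_Q \langle\varphi\mu\rangle_Q^2|Q|^2 = \sum_Q \mu(Q)^2 (\langle\varphi\rangle_Q^\mu)^2$, where $\langle\varphi\rangle_Q^\mu := \mu(Q)^{-1}\int_Q\varphi\,d\mu$, I would assume $\varphi\ge 0$ and build a stopping family $\mathcal{S}\subset\mathcal{D}(R^0)$ in generations: $\mathcal{S}_0=\{R^0\}$, and $\mathcal{S}_{j+1}$ consists of the maximal $Q\subsetneq H$ (for $H\in\mathcal{S}_j$) with $\langle\varphi\rangle_Q^\mu > 2\langle\varphi\rangle_H^\mu$. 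For $H\in\mathcal{S}=\bigcup_j\mathcal{S}_j$, let $O_H$ collect the rectangles whose deepest $\mathcal{S}$-ancestor is $H$; on $O_H$ one has $\langle\varphi\rangle_Q^\mu\le 2\langle\varphi\rangle_H^\mu$, so
\[
\sum_Q \mu(Q)^2(\langle\varphi\rangle_Q^\mu)^2 \le 4\sum_{H\in\mathcal{S}} (\langle\varphi\rangle_H^\mu)^2 \sum_{Q\in O_H}\mu(Q)^2.
\]
The decisive step is to bound the inner sum: set $E_H:=\bigcup_{Q\in O_H}Q\subset\partial T^2$ (a union of rectangles, \emph{not} a single rectangle) and apply \eqref{e:511} to $E_H$ to obtain
\[
\sum_{Q\in O_H}\mu(Q)^2 \le \sum_{Q\subset E_H}\mu(Q)^2 \le \mu(E_H) \le \mu(H).
\]
I would then close by a sparse-family estimate: the doubling stopping rule provides $\mu$-disjoint reservoirs $E_H^*\subset H\setminus\bigcup_{H'\in\mathcal{S},\,H'\subsetneq H}H'$ with $\mu(E_H^*)\gtrsim\mu(H)$, and $\sum_H (\langle\varphi\rangle_H^\mu)^2 \mu(E_H^*)$ is dominated by $\int(M_\mu\varphi)^2\,d\mu$, where $M_\mu\varphi(x)=\sup_{Q\ni x}\langle\varphi\rangle_Q^\mu$ is the bi-tree $\mu$-maximal function.

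The main obstacle is exactly the invocation of \eqref{e:511} at the non-rectangular set $E_H$: in one parameter $O_H$ automatically aggregates into a single dyadic interval, so the classical rectangle-only testing suffices, but on the bi-tree $E_H$ is an honest union of rectangles, and this is precisely why \eqref{e:511} must be imposed for all $E\subset\partial T^2$. A secondary, delicate point is the closing bound $\int(M_\mu\varphi)^2\,d\mu\lesssim\int\varphi^2\,d\mu$, since the bi-tree strong dyadic maximal function need not be $L^2(\mu)$-bounded for non-product $\mu$. I expect this is where the ``different approach'' alluded to in the abstract actually intervenes---most likely by a second application of \eqref{e:511} to the super-level sets $\{\langle\varphi\rangle^\mu>2^k\}$ and summation of a geometric series, thereby bypassing the maximal function entirely and turning the argument into a bona fide capacitary one.
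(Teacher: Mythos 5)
The paper does not actually prove Theorem~\ref{t:th3.11}: the remark after the statement says the two existing proofs are in \cite{AMPS} (capacity and strong capacitary inequalities on the bi-tree) and \cite{AHMV2} (avoiding capacity), and the paper itself only proves the strictly weaker Theorem~\ref{t:th3.1}, which assumes the one-box condition and concludes with a $|Q|^3$ weight. So there is no ``paper's own proof'' to match; what can be judged is whether your argument closes.

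It does not, and the gap is not where you place it. You flag the application of \eqref{e:511} at the non-rectangular set $E_H$ as ``the main obstacle,'' but that step is already handled by the hypothesis, which is stated for arbitrary $E\subset\partial T^2$ precisely so that one may plug in unions of rectangles; your observation is a correct explanation of \emph{why} the hypothesis must be this strong, not an obstacle. The genuine hole is your ``sparse-family estimate.'' You assert that the doubling stopping rule yields $\mu$-disjoint reservoirs $E_H^*\subset H\setminus\bigcup_{H'\in\mathcal{S},\,H'\subsetneq H}H'$ with $\mu(E_H^*)\gtrsim\mu(H)$. On the one-parameter tree this follows because the maximal stopping children of $H$ are pairwise disjoint intervals, so $\sum\mu(H')<\tfrac12\mu(H)$. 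On the bi-tree it fails: two maximal stopping rectangles $I_1\times J_1$ and $I_2\times J_2$ can be incomparable in the bi-tree order (say $I_1\subsetneq I_2$, $J_2\subsetneq J_1$) yet overlap as sets, so the sum $\sum\mu(H')$ overcounts, and $\mu(H\setminus\bigcup H')$ can be arbitrarily small or zero even with the doubling rule. This is exactly the same failure mode as the lack of a Calder\'on--Zygmund decomposition, and the lack of $L^2(\mu)$-boundedness of the strong maximal function, for general bi-parameter measures; it is the central difficulty of the problem, not a secondary one. Your fallback suggestion---apply \eqref{e:511} to super-level sets and sum a geometric series---is the right general shape, but a naive summation does not close: one needs a genuine strong capacitary inequality (in the sense of Adams/Maz'ya, adapted to the bi-tree), and establishing that inequality is precisely the nontrivial content of \cite{AMPS}. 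As written, the proposal sets up a sensible first reduction but leaves the key estimate unproved.
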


\begin{remark}
There are now two proofs of this theorem, one in \cite{AMPS} and one in \cite{AHMV2}.
The paper \cite{AMPS} uses capacity and strong capacitary inequalities on the bi-tree, while the proof in \cite{AHMV2} avoids the notion of capacity and strong capacitary estimates completely.
Note that neither the claim nor the conclusion of Theorem \ref{t:th3.11} uses any kind of capacity.
\end{remark}

\begin{remark}
We believe that it would be not enough to check \eqref{e:511} only  for single rectangles. Let us see anyway what we can achieve assuming this one box condition, through a Bellman argument.
\end{remark}

\subsection{One box condition and its corollary}
In the next theorem it is essential to think that $\mu=\{\la_\beta\}$ is the measure on  $\pd T^2$.
Every dyadic rectangle $R$ corresponds to a node of $T^2$, and we will use this in the notations below.
\begin{theorem}\label{t:th3.1}
Let $\mu$ be a positive measure on $R^0 = [0,1)^2$. Let $\langle \mu\rangle_{R}$ denote $\frac{\mu(R)}{|R|}$. Let $\varphi$ be a measurable test function. Then, if
\begin{equation}\label{e:51box}
\sum_{Q\subset R,\; Q\in \mathcal{D}(R^0)}\langle \mu\rangle_Q^2|Q|^2 \leq \mu(R),\quad \forall\;\textup{rectangle}\; R,
\end{equation}
then
\begin{equation}\label{e:52}
\sum_{Q\in \mathcal{D}(R^0)} \langle \varphi \mu\rangle^2_{Q}|Q|^3 \lesssim \langle\varphi^2 \mu\rangle_{R^0}|R^0|.
\end{equation}
\end{theorem}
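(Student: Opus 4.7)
The plan is to use Cauchy--Schwarz to pass from $\langle\varphi\mu\rangle_Q$ to $\langle\varphi^2\mu\rangle_Q$, and to exploit only the mildest consequence of the one-box hypothesis, namely the uniform bound $\mu(Q)\leq 1$ for every dyadic rectangle $Q\subset R^0$. The extra factor $|Q|$ in the weight $|Q|^3$ of \eqref{e:52} (as opposed to the $|Q|^2$ of \eqref{e:521}) is precisely what should make the remaining sum converge as a double geometric series, without any further structural use of the Carleson condition.

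First I would extract the pointwise bound $\mu(Q)\leq 1$ directly from \eqref{e:51box}: specialising to $R=Q$ and keeping only the single term $Q'=Q$ on the left yields $\langle\mu\rangle_Q^2|Q|^2 = \mu(Q)^2 \leq \mu(Q)$. Next, Cauchy--Schwarz against $d\mu\!\upharpoonright_Q$ gives $\langle\varphi\mu\rangle_Q^2 \leq \langle\varphi^2\mu\rangle_Q\,\langle\mu\rangle_Q$; multiplying by $|Q|^3$ and using the identity $\langle\mu\rangle_Q|Q|=\mu(Q)$, this becomes
\[
\langle\varphi\mu\rangle_Q^2\,|Q|^3 \;\leq\; \mu(Q)\,|Q|\int_Q\varphi^2\,d\mu.
\]

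Summing in $Q\in\mathcal{D}(R^0)$ and swapping sum and integral by Tonelli,
\[
\sum_{Q\in\mathcal{D}(R^0)}\langle\varphi\mu\rangle_Q^2\,|Q|^3 \;\leq\; \int_{R^0}\varphi^2(z)\,\Bigl(\sum_{Q\ni z}\mu(Q)\,|Q|\Bigr)\,d\mu(z).
\]
For each fixed $z\in R^0$ there is exactly one dyadic rectangle $Q\ni z$ of each shape $2^{-k}\times 2^{-\ell}$, so the bound $\mu(Q)\leq 1$ gives $\sum_{Q\ni z}\mu(Q)|Q|\leq\sum_{k,\ell\geq 0}2^{-k-\ell}=4$, and \eqref{e:52} follows with constant $4$.

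I do not foresee any genuine obstacle here: the entire content is that the one-box condition, even evaluated on a single rectangle, already forces $\mu(Q)\leq 1$, and the extra factor of $|Q|$ in the weight converts this into a convergent double geometric series. The real interest of the argument is negative: it does \emph{not} appear to recover the sharp exponent $|Q|^2$ of Theorem~\ref{t:th3.11}, which is consistent with the remark preceding the statement that the one-box condition is probably strictly weaker than the full Carleson-type hypothesis \eqref{e:511}.
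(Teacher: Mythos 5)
Your proof is correct, and it takes a genuinely different route from the paper. The paper proves Theorem~\ref{t:th3.1} by running the Bellman function $B(F,f,A,v)=F-\frac{f^2}{v+A}$ on the bi-tree: it shows that under a four-way split $R\to R_-,R_+,R^t,R^b$ the variables $F,f,v$ split as martingales while $A$ is a super-martingale gaining $\mu(R)^2/|R|$, applies concavity of $B$ to harvest $\left(\int_R\varphi\,d\mu\right)^2/|R|$ per rectangle, multiplies by $|R|^2$, and telescopes. Your argument replaces all of this with Cauchy--Schwarz on each $Q$, followed by Tonelli and a double geometric series, and needs only the single diagonal term $\mu(Q)^2\leq\mu(Q)$ of the hypothesis. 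The comparison is instructive in a way that goes beyond what you say at the end: the bound $\mu(Q)\leq 1$ for all dyadic $Q\subset R^0$ is equivalent to the single normalization $\mu(R^0)\leq 1$ (monotonicity of $\mu$), so your argument in fact shows that the conclusion \eqref{e:52} with weight $|Q|^3$ follows from $\mu(R^0)\leq 1$ alone and does not use the structure of the one-box condition at all. This makes the $|Q|^3$ statement essentially trivial and indicates that the paper's Bellman machinery, while correctly set up, is not extracting any nontrivial strength from \eqref{e:51box} here; the genuine difficulty, as you note, is the passage from $|Q|^3$ to the $|Q|^2$ of Theorem~\ref{t:th3.11}, which requires the full hypothesis \eqref{e:511} and the methods of \cite{AMPS} or \cite{AHMV2}. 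One small stylistic point: it is worth stating explicitly that the hypothesis reduces to $\mu(R^0)\leq 1$, since that observation is really the payoff of the proof.
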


\begin{proof}
We consider exactly the same function $B(x)$, $x=(F, f, A, v)$,
$$
B(x) = F- \frac{f^2}{v+A}\,.
$$
Given a rectangle $R$ we  consider
$$
F_R =\frac1{|R|}\sum_{\beta\le R}\phi_\beta^2\la_\beta = \frac1{|R|}\int_R \phi^2 d\mu, \, f_R=\frac{1}{|R|} \sum_{\beta\le R} \phi_\beta \lambda_\beta= \frac1{|R|}\int_R \phi d\mu,\,
$$
$$
v_R = \frac1{|R|} \sum_{\beta\le R} \la_\beta=\frac{\mu(R)}{|R|},\, A_R =\frac1{|R|} \sum_{\beta\le R} v_\beta^2 |R_\beta|^2,\,\, x_R=(F_R, \dots, v_R)\,.
$$
Let $R_+, R_-$ be right and left half-rectangles of $R$, and $R^t, R^b$ be top and bottom half-rectangles of $R$ (so, e.g.,  if $R= I\times J$, the $R^t= I\times J_+$).
Now let us estimate from below
$$
B(x_R) -\frac14\bigg( B(x_{R_-}) + B(x_{R_+}) + B(x_{R^t}) +B(x_{R^b})\bigg)\,.
$$
As $\mu$ is concentrated on the boundary, we see immediately, that
$$
F_R= \frac14\bigg(F_{R_-} + F_{R_+} + F_{R^t} + F_{R^b}\bigg),\,\,
f_R= \frac14\bigg(f_{R_-} + f_{R_+} + f_{R^t} + f_{R^b}\bigg)\,.
$$
At the same time,
$$
A_R- \frac14\bigg(A_{R_-} + A_{R_+} + A_{R^t} + A_{R^b}\bigg) \ge \frac1{|R|}\mu(R)^2 ,\,\,
v_R=\frac14\bigg(v_{R_-} + v_{R_+} + v_{R^t} + v_{R^b}\bigg)\,.
$$
The second equality here is just because $v_R=\frac12\bigg(v_{R_-} + v_{R_+} \bigg)$.  And $v_R=\frac12\bigg(v_{R^t} + v_{R^b} \bigg)$.
The first one because    any $\beta$-term in $A_R$  such that this term happens to be in  two rectangles, e.g. in $A_{R_-}$ and $A_{R^t}$, will be cancelled
in the difference. The terms that happen only in one rectangle (this is the case for $R_{--}$ as an example) will be in coefficient $\frac1{|R|}$ in $A_R$, and only with coefficient $\frac1{2|R|}$ in $ \frac14\bigg(A_{R_-} + A_{R_+} + A_{R^t} + A_{R^b}\bigg)$, so it gives a partial (positive) contribution to $A_R- \frac14\bigg(A_{R_-} + A_{R_+} + A_{R^t} + A_{R^b}\bigg)$. And of course, $ \frac1{|R|}\mu(R)^2$ is in $A_R$ and in none of $A_{R_-}, A_{R_+} , A_{R^t}, A_{R^b}$, so it also the part of the contribution.

So we see that three variables $F, f, v$ split in a ``martingale" way, and for $A_R$ we have the above ``super-martingale" inequality.

Thus, considering 
$$
x_R^*= (F_R; f_R; \frac14(A_{R_-} + A_{R_+} + A_{R^t} + A_{R^b}); v_R)
$$
we can write
\begin{align*}
&B(x_R)- B(x_R^*) \ge \frac{\pd B}{\pd A}(x_R)(x_R- x_R^*);
\\
& B(x_R^*)   \ge \frac{1}{4} \bigg( B(x_{R_-})+ B(x_{R_+}) + B(x_{R^t}) +B(x_{R^b})\bigg)\,.
\end{align*}
Here both inequalities are corollaries of the concavity of $B$, in the first one we used that all coordinates of $x_R, x_R^*$ coincide except the $A$-coordinate.
Theerfore, now we  get
$$
B(x_R) -\frac14\bigg( B(x_{R_-}) + B(x_{R_+}) + B(x_{R^t}) +B(x_{R^b})\bigg) \ge c\frac{f_R^2}{v_R^2}\frac{\mu(R)^2}{|R|}
$$
$$
\ge c\frac{\frac1{|R|^2}\big(\int_R\phi d\mu\big)^2}{\mu(R)^2/|R|^2}\frac{\mu(R)^2}{|R|} = \big(\int_R\phi d\mu\big)^2/ |R|\,.
$$
Multiply this by $|R|^2$. We get a term of telescopic sum on the left:
$$
|R|^2B(x_R) -\bigg(|R_-|^2 B(x_{R_-}) + |R_+|^2B(x_{R_+}) + |R^t|^2B(x_{R^t}) +|R^b|^2B(x_{R^b})\bigg) 
$$
$$
\ge c |R| \big(\int_R\phi d\mu\big)^2\,.
$$
Notice that on the next step we pick up all terms $|R| \big(\int_R\phi d\mu\big)^2$ with $R:= R_-, R_+, R^t, R^b$.
Theorem is proved.
\end{proof}

\end{document}